\journal{Mathematics and Computers in Simulation}
\newtheorem{theorem}{Theorem}[section]
\newtheorem{lemma}[theorem]{Lemma}
\newtheorem{property}[theorem]{Property}
\theoremstyle{remark}
\theoremstyle{definition}
\newtheorem{definition}{Definition}[section]
\begin{document}
	
\begin{frontmatter}
		
\title{Stability of a fractional HIV/AIDS model}

\author[CIDMA]{Cristiana J. Silva}
\ead{cjoaosilva@ua.pt}

\author[CIDMA]{Delfim F. M. Torres\corref{correspondingauthor}}
\cortext[correspondingauthor]{Corresponding author.}
\ead{delfim@ua.pt}

\address[CIDMA]{Center for Research and Development in Mathematics and Applications (CIDMA),\\
Department of Mathematics, University of Aveiro, 3810-193 Aveiro, Portugal}

\begin{abstract}
We propose a fractional order model for HIV/AIDS transmission. 
Local and uniform stability of the fractional order model is studied. 
The theoretical results are illustrated through numerical simulations. 
\end{abstract}

\begin{keyword}
HIV/AIDS fractional model\sep local stability\sep uniform stability\sep Lyapunov functions. 

\MSC[2010] 34C60\sep 34D23\sep 92D30.
\end{keyword}

\end{frontmatter}

	
\section{Introduction}

Fractional differential equations (FDEs), also known in the literature
as extraordinary differential equations, are a generalization of differential 
equations through the application of fractional calculus, that is, the branch 
of mathematical analysis that studies different possibilities of defining 
differentiation operators of noninteger order \cite{MR3443073,MR1356813}. 
FDEs are naturally related to systems with memory, which explains 
their usefulness in most biological systems \cite{MR3722341}.
Indeed, FDEs have been considered in many epidemiological models. 
In \cite{Ahmed2007}, a fractional order model for nonlocal epidemics is considered, 
and the results are expected to be relevant to foot-and-mouth disease, 
SARS, and avian flu. Some necessary and sufficient conditions for local stability 
of fractional order differential systems are provided \cite{Ahmed2007}. In \cite{Ozalp:2011}, 
a fractional order SEIR model with vertical transmission within a nonconstant 
population is considered, and the asymptotic stability of the disease free 
and endemic equilibria are analyzed. The stability of the endemic 
equilibrium of a fractional order SIR model is studied in \cite{Guo:fracSIR:2017}. 
A fractional order model of HIV infection of CD4+ T-cells is analyzed in \cite{Ding:Ye:2009}. 
A fractional order predator prey model and a fractional order rabies model are proposed in
\cite{Ahmed:etall:predatorprey:2207}. The stability of equilibrium points are
studied, and an example is given where the equilibrium point is a centre 
for the integer order system but locally asymptotically stable 
for its fractional-order counterpart \cite{Ahmed:etall:predatorprey:2207}.
A fractional control model for malaria transmission is proposed 
and studied numerically in \cite{PintoMachado:Malaria:2013}. 

The question of stability for FDEs is crucial:
see,  e.g., \cite{LiZhang:2011,Rivero:etall:2013} 
for good overviews on stability of linear/nonlinear, positive, with delay, 
distributed, and continuous/discrete fractional order systems. 
In \cite{Delavari2012}, an extension of the Lyapunov direct method 
for fractional-order systems using Bihari's and Bellman--Gronwall's 
inequality, and a proof of a comparison theorem for fractional-order
systems, are obtained. A new lemma for Caputo fractional derivatives, 
when $0 < \alpha < 1$, is proposed in \cite{Aguila-Camacho:2014}, 
which allows to find Lyapunov candidate functions for proving 
the stability of many fractional order systems, using the 
fractional-order extension of the Lyapunov direct method.
Motivated by the work \cite{Aguila-Camacho:2014}, the authors of 
\cite{VargasDeLeon201575} extended the Volterra-type Lyapunov function 
to fractional-order biological systems through an inequality to estimate 
the Caputo fractional derivatives of order $\alpha \in (0, 1)$. 
Using this result, the uniform asymptotic stability of some Caputo-type 
epidemic systems with a pair of fractional-order differential equations is proved. 
Such systems are the basic models of infectious disease dynamics 
(SIS, SIR and SIRS models) and Ross--Macdonald model for vector-borne diseases.
For more on the subject see \cite{MR3173287}, where
the problem of output feedback stabilization for fractional order linear 
time-invariant systems with fractional commensurate order is investigated,
and \cite{MR3186983}, where the stability of a special observer with 
a nonlinear weighted function and a transient dynamics function is 
rigorously analyzed for slowly varying disturbances and higher-order 
disturbances of fractional-order systems.

Here we propose a Caputo fractional order SICA epidemiological model 
with constant recruitment rate, mass action incidence and variable population size, 
for HIV/AIDS transmission. The model is based on an integer-order 
HIV/AIDS model without memory effects firstly proposed in \cite{SilvaTorres:TBHIV:2015} 
and later modified in \cite{SILVA201770,SilvaTorres:HIV:Ankara:2018}.
The model for $\alpha = 1$ describes well the clinical reality given by the data 
of HIV/AIDS infection in Cape Verde from 1987 to 2014 \cite{SILVA201770}.
In the present work, we extend the model by considering fractional differentiation, 
in order to capture memory effects, long-rage interactions, and hereditary properties,
which exist in the process of HIV/AIDS transmission but are neglected in the case
$\alpha = 1$, that is, for integer-order differentiation \cite{MR3872489,MR3780644}. 
Using the results from \cite{Matignon:1996} and \cite{Ahmed2007}, we prove the local asymptotic 
stability of the disease free equilibrium. Then, we extend the results 
of \cite{Delavari2012} and \cite{VargasDeLeon201575} and prove the uniform 
asymptotic stability of the disease free and endemic equilibrium points. 
For the numerical implementation of the fractional derivatives, 
we have used the Adams--Bashforth--Moulton scheme, which has been 
implemented in the \textsf{fde12} \textsf{Matlab} routine by Garrappa \cite{Garrappa}.
The software code implements a predictor-corrector PECE method, 
as described in \cite{Diethelm:1999}.

The paper is organized as follows. In Section~\ref{sec:pre}, we present basic 
definitions and recall necessary results on Caputo fractional calculus and 
local and uniform asymptotic stability and Volterra-type 
Lyapunov functions for fractional-order systems. The original
results appear in Section~\ref{sec:frac:model}:
we introduce our Caputo fractional-order HIV/AIDS model  
and study the existence of equilibrium points. 
More precisely, in Section~\ref{sec:local:DFE} 
we prove local asymptotic stability of the disease 
free equilibrium, while in Sections~\ref{key} and \ref{sec:unif:stab} 
we prove uniform asymptotic stability of the disease free 
and endemic equilibrium points, respectively. 
We end with Section~\ref{sec:numsim} of numerical simulations, 
which illustrate the stability results proved 
in Sections~\ref{sec:local:DFE}--\ref{sec:unif:stab}. 


\section{Preliminaries on the Caputo fractional calculus}
\label{sec:pre}

We begin by introducing the definition of Caputo fractional derivative
and recalling its main properties.

\begin{definition}[See \cite{GJI:GJI529}]
Let $a > 0$, $t > a$, and $\alpha, a, t \in\mathbb{R} $. The Caputo fractional 
derivative of order $\alpha$ of a function $f \in C^n$ is given by
\begin{equation*}
_{a}^{C}D_{t}^{\alpha}f(t)= \dfrac{1}{\Gamma(n-\alpha)} 
\int_{a}^{t}\dfrac{f^{(n)}(\xi)}{(t-\xi)^{\alpha+1-n}}d\xi,
\qquad n-1<\alpha<n \in \mathbb{N}.
\end{equation*}
\end{definition}

\begin{property}[Linearity; see, e.g., \cite{Diethelm}] 
\label{propertyLinear}
Let $f,g : [a,b] \rightarrow \mathbb{R}$ be such that 
$_{a}^{C}D_{t}^{\alpha}f(t)$ and $_{a}^{C}D_{t}^{\alpha}g(t)$ 
exist almost everywhere and let $c_1,c_2 \in \mathbb{R}$. Then, 
$_{a}^{C}D_{t}^{\alpha}(c_1 f(t)+ c_2 g(t))$ exists almost 
everywhere with
\begin{equation*}
_{a}^{C}D_{t}^{\alpha} (c_{1} f(t)+ c_{2} g(t))
= c_{1} \, _{a}^{C}D_{t}^{\alpha}f(t) 
+ c_{2}\, _{a}^{C}D_{t}^{\alpha}g(t).
\end{equation*}
\end{property}

\begin{property}[Caputo derivative of a constant; see, e.g., \cite{Podlubny}] 
The fractional derivative of a constant function $f(t)\equiv c$ is zero:
\begin{equation*}
_{a}^{C}D_{t}^{\alpha} c = 0.
\end{equation*}
\end{property}

Let us consider the following general fractional differential 
equation involving the Caputo derivative:
\begin{equation}
\label{CaputoGeneral}
_{a}^{C}D_{t}^{\alpha} x(t)= f(t,x(t)), 
\qquad \alpha \in (0,1),
\end{equation}
subject to a given initial condition $x_0=x(t_0)$.

\begin{definition}[See, e.g., \cite{LiChen:Automatica:2009}]
\label{def:eq}
The constant $x^*$ is an equilibrium point of the Caputo fractional 
dynamic system \eqref{CaputoGeneral} if, and only if, $f(t, x^*) = 0$. 
\end{definition}

Following \cite{Matignon:1996}, an equilibrium point $x^*$ of the Caputo fractional 
dynamic system \eqref{CaputoGeneral} is locally asymptotically stable if all 
the eigenvalues $\lambda$ of the Jacobian matrix of system \eqref{CaputoGeneral}, 
evaluated at the equilibrium point $x^*$, satisfies the following condition:
\begin{equation}
\label{cond:alphapi2}
|\arg (\lambda)| > \frac{\alpha \pi}{2}.
\end{equation}

Next theorem gives an extension of the celebrated Lyapunov direct method
for Caputo type fractional order nonlinear systems \cite{Delavari2012}. 

\begin{theorem}[Uniform Asymptotic Stability \cite{Delavari2012}]
\label{uniform_stability}
Let $x^*$ be an equilibrium point for the nonautonomous fractional 
order system \eqref{CaputoGeneral} and $\Omega \subset \mathbb{R}^{n}$ 
be a domain containing $x^*$. Let $L:[0,\infty) \times \Omega 
\rightarrow \mathbb{R}$ be a continuously differentiable function
such that
\begin{equation*}
W_1(x) \leq L(t,x(t)) \leq W_2(x)
\end{equation*}
and
\begin{equation*}
_{a}^{C}D_{t}^{\alpha}  L(t,x(t)) \leq -W_3(x)
\end{equation*}
for all $\alpha\in (0,1)$ and all $x \in \Omega$, 
where $W_1(\cdot)$, $W_2(\cdot)$ and $W_3(\cdot)$ 
are continuous positive definite functions on $\Omega$. 
Then the equilibrium point $x^*$ of system \eqref{CaputoGeneral} 
is uniformly asymptotically stable.
\end{theorem}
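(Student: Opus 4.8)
The plan is to prove this by adapting the classical integer-order Lyapunov stability argument to the Caputo fractional setting, following the structure of the comparison-principle approach developed in \cite{Delavari2012}. The key obstacle is that the Caputo derivative is nonlocal, so the standard first-order argument (if $\dot L < 0$ then $L$ decreases) is not directly available; instead one must pass through a fractional comparison theorem.

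\begin{proof}[Proof sketch]
First I would fix an equilibrium $x^*$ and, without loss of generality, translate coordinates so that $x^* = 0$, noting that by Definition~\ref{def:eq} this does not affect the hypotheses on $W_1, W_2, W_3$. The bounds $W_1(x) \leq L(t,x(t)) \leq W_2(x)$ with $W_1, W_2$ continuous and positive definite establish that $L$ is \emph{decrescent} and positive definite in the standard Lyapunov sense, which will be used to translate smallness of $L$ into smallness of $\|x\|$ and vice versa.

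Next I would invoke the fractional comparison principle. Since $W_3$ is continuous and positive definite, the inequality $_{a}^{C}D_{t}^{\alpha} L(t,x(t)) \leq -W_3(x) \leq 0$ shows that $L(t,x(t))$ is bounded above along trajectories by the solution of the associated fractional scalar comparison equation with zero forcing, so that $L(t,x(t)) \leq L(t_0, x_0)$ for all $t \geq t_0$ after applying the fractional integral operator $_{a}I_{t}^{\alpha}$ (the left inverse of $_{a}^{C}D_{t}^{\alpha}$) and using the positivity of the kernel $(t-\xi)^{\alpha-1}/\Gamma(\alpha)$. Combined with the sandwich bounds, this yields \emph{uniform stability}: given $\varepsilon > 0$, choosing $\delta$ so that $W_2(x_0) < W_1(\text{ball of radius }\varepsilon)$ forces the trajectory to remain in the $\varepsilon$-ball for all $t$, uniformly in $t_0$.

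To upgrade stability to \emph{asymptotic} stability, I would argue that $L(t,x(t))$ must tend to a limit and then show that this limit is zero. Because $_{a}^{C}D_{t}^{\alpha} L \leq -W_3(x)$, the nonincreasing behaviour of $L$ together with the strict negativity of $-W_3$ away from the equilibrium precludes convergence to a positive value; more precisely, if $\|x(t)\|$ did not tend to $0$, then $W_3(x(t))$ would be bounded below by a positive constant along a sequence of times, and fractionally integrating $-W_3$ would drive $L$ below $W_1 \geq 0$, a contradiction. This is essentially Bihari's / Bellman--Gronwall-type estimate invoked in \cite{Delavari2012}. Hence $\|x(t)\| \to 0$, and the convergence is uniform in $t_0$ by the uniformity of the bounds, giving uniform asymptotic stability.

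I expect the main obstacle to be making the final asymptotic step rigorous: the fractional integral of a nonnegative positive-definite term does not decay in the same clean way as in the integer-order LaSalle argument, since fractional integration has a long memory tail and $L$ need not be monotone pointwise. The clean route is to cite the comparison theorem and the Bihari--Bellman--Gronwall inequality established in \cite{Delavari2012} rather than reprove them, and to use the decrescent/positive-definite sandwich purely to transfer the scalar conclusion $L(t,x(t)) \to 0$ back to $\|x(t)\| \to 0$.
\end{proof}
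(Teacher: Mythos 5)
This theorem is not proved in the paper at all: it is stated in Section~\ref{sec:pre} as a recalled result, quoted verbatim from \cite{Delavari2012}, and is used later as a black box in the proofs of Theorems~\ref{theor:DFE} and \ref{theor:EndEqui}. So there is no in-paper argument to compare yours against; the relevant benchmark is the proof in \cite{Delavari2012} itself, and your sketch follows essentially that route (fractional comparison principle to get monotonicity of $L$ along trajectories, the $W_1/W_2$ sandwich to convert between smallness of $L$ and smallness of $\|x\|$, and a contradiction argument using $W_3$ for the asymptotic part). As an outline it is sound, and your closing remark --- that the honest course is to cite the comparison theorem and the Bihari/Bellman--Gronwall machinery of \cite{Delavari2012} rather than reprove them --- is exactly what the paper does.

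One step in your asymptotic argument is stated in a form that would not survive being made rigorous. You say that if $\|x(t)\| \not\to 0$ then $W_3(x(t))$ is bounded below by a positive constant \emph{along a sequence of times}, and that fractionally integrating $-W_3$ then drives $L$ below $W_1 \geq 0$. That is not enough: the fractional integral
\begin{equation*}
\frac{1}{\Gamma(\alpha)}\int_{t_0}^{t}(t-s)^{\alpha-1}\,W_3(x(s))\,ds
\end{equation*}
of a nonnegative function that is large only on a sparse sequence of instants need not diverge, so no contradiction follows from a sequence alone. The correct argument runs through $L$ itself: by the comparison principle $L(t,x(t))$ is nonincreasing and bounded below by $0$, hence converges to some $c \geq 0$; if $c > 0$, then $W_2(x(t)) \geq L(t,x(t)) \geq c$ for all $t$, and positive definiteness plus continuity of $W_2$ force $\|x(t)\| \geq r > 0$ for \emph{all} $t \geq t_0$ (not just a sequence); then $W_3(x(t)) \geq k > 0$ uniformly on the compact annulus $\{r \leq \|x\| \leq R\}$, so $_{a}^{C}D_{t}^{\alpha}L \leq -k$ and fractional integration gives $L(t,x(t)) \leq L(t_0,x_0) - k\,(t-t_0)^{\alpha}/\Gamma(\alpha+1) \to -\infty$, contradicting $L \geq W_1 \geq 0$. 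A secondary point: the uniformity in $t_0$ of the decay (which is what distinguishes \emph{uniform} asymptotic stability for a nonautonomous system) is asserted in your last sentence but not argued; it comes precisely from the fact that all the bounds $W_1, W_2, W_3$ are time-independent, and deserves at least one explicit line.
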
 

In what follows, we recall a lemma proved in \cite{VargasDeLeon201575}, 
where a Volterra-type Lyapunov function is obtained 
for fractional-order epidemic systems. 

\begin{lemma}[See \cite{VargasDeLeon201575}]
\label{lemmaVargas}
Let $x(\cdot)$ be a continuous and differentiable function
with $x(t)\in \mathbb{R_{+}}$. Then, for any time instant 
$t \geq t_0$, one has
\begin{equation*}
_{t_{0}}^{C}D_{t}^{\alpha}\left[ x(t)-x^{*}-x^{*}
\ln\dfrac{x(t)}{x^{*}}\right]
\leq \left( 1-\dfrac{x^{*}}{x(t)}\right) \,
{_{t_{0}}^{C}D}_{t}^{\alpha} x(t), 
\qquad x^{*} \in \mathbb{R}^{+}, 
\qquad \forall\alpha\in (0,1).
\end{equation*}
\end{lemma}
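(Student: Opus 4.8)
The plan is to prove the inequality in Lemma~\ref{lemmaVargas} by defining the auxiliary function
\begin{equation*}
g(x) = x - x^{*} - x^{*}\ln\dfrac{x}{x^{*}},
\end{equation*}
and then applying the Caputo fractional derivative to the composition $g(x(t))$. The key observation is that $g$ is a smooth function on $\mathbb{R}^{+}$ whose ordinary derivative is $g'(x) = 1 - x^{*}/x$, which is precisely the factor appearing on the right-hand side of the claimed inequality. So morally the statement is a fractional chain-rule inequality: for integer order we would have equality $\frac{d}{dt}g(x(t)) = g'(x(t))\,\dot{x}(t) = \bigl(1 - x^{*}/x(t)\bigr)\dot{x}(t)$, and the lemma asserts that for the Caputo derivative of order $\alpha\in(0,1)$ the chain rule degrades into a one-sided inequality in the correct direction.

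First I would reduce everything to a statement about a single scalar function. Writing out the Caputo derivative of $g(x(t))$ from the definition,
\begin{equation*}
{_{t_{0}}^{C}D}_{t}^{\alpha} g(x(t)) = \frac{1}{\Gamma(1-\alpha)}\int_{t_{0}}^{t} \frac{g'(x(\xi))\,\dot{x}(\xi)}{(t-\xi)^{\alpha}}\,d\xi,
\end{equation*}
and comparing with $\bigl(1 - x^{*}/x(t)\bigr){_{t_{0}}^{C}D}_{t}^{\alpha}x(t) = g'(x(t))\cdot\frac{1}{\Gamma(1-\alpha)}\int_{t_{0}}^{t}\frac{\dot{x}(\xi)}{(t-\xi)^{\alpha}}\,d\xi$, I see that the claimed inequality is equivalent, after multiplying through by $\Gamma(1-\alpha) > 0$, to showing that the integral
\begin{equation*}
\int_{t_{0}}^{t} \frac{\bigl[g'(x(\xi)) - g'(x(t))\bigr]\,\dot{x}(\xi)}{(t-\xi)^{\alpha}}\,d\xi \leq 0.
\end{equation*}
The natural route is to integrate by parts to remove the factor $\dot{x}(\xi)$ and transfer the analysis onto the convexity of $g$; the weight $(t-\xi)^{-\alpha}$ and the convexity inequality $g(x(\xi)) \geq g(x(t)) + g'(x(t))\bigl(x(\xi)-x(t)\bigr)$ should combine to give the required sign.

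The cleanest self-contained approach is to mimic the argument of \cite{Aguila-Camacho:2014} adapted to this Volterra function: introduce $y(\xi) = x(\xi) - x^{*}$ and exploit that $g$, as a function of $x$, is convex (since $g''(x) = x^{*}/x^{2} > 0$ for $x, x^{*} > 0$). After integration by parts one lands on a double integral, or on a boundary term plus a remaining integral, in which the integrand carries a definite sign coming from the convexity of $g$ together with the positivity and monotonicity of the kernel $(t-\xi)^{-\alpha}$. I would organize the computation so that every surviving term is manifestly nonpositive, using that $\xi \mapsto (t-\xi)^{-\alpha}$ is increasing on $[t_{0},t)$ and that the convexity defect $g(x(\xi)) - g(x(t)) - g'(x(t))(x(\xi)-x(t))$ is nonnegative.

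The hard part will be controlling the boundary term at $\xi = t$, where the kernel $(t-\xi)^{-\alpha}$ is singular. I expect to need the differentiability and continuity hypotheses on $x(\cdot)$ precisely here, to guarantee that the singular integral converges and that the integration by parts is legitimate (the product $g(x(\xi))$ times the integrated kernel must vanish in the limit $\xi\to t$ at the right rate, which holds because $\alpha < 1$). Once the convergence of the improper integral and the vanishing of the singular boundary contribution are established, the sign of the remaining terms follows directly from convexity, and the inequality is proved.
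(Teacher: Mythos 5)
The paper does not prove this lemma: it is quoted verbatim from the cited reference \cite{VargasDeLeon201575}, so there is no in-paper argument to compare against. Your plan correctly reconstructs the proof given in that reference (which in turn adapts \cite{Aguila-Camacho:2014}), and all the essential ideas are in place: the reduction to showing
$\int_{t_{0}}^{t}\bigl[g'(x(\xi))-g'(x(t))\bigr]\dot{x}(\xi)\,(t-\xi)^{-\alpha}\,d\xi\leq 0$,
the integration by parts, the convexity of $g$, and the singular boundary term at $\xi=t$ as the only delicate point. To close the sketch, note that the integrand is exactly $h'(\xi)(t-\xi)^{-\alpha}$ where
$h(\xi)=g(x(\xi))-g(x(t))-g'(x(t))\bigl(x(\xi)-x(t)\bigr)$
is your convexity defect: it satisfies $h\geq 0$, $h(t)=0$, and $h(\xi)=O\bigl((x(\xi)-x(t))^{2}\bigr)=O\bigl((t-\xi)^{2}\bigr)$ near $\xi=t$, so integration by parts yields
$\bigl[h(\xi)(t-\xi)^{-\alpha}\bigr]_{t_{0}}^{t}-\alpha\int_{t_{0}}^{t}h(\xi)(t-\xi)^{-\alpha-1}\,d\xi
=-h(t_{0})(t-t_{0})^{-\alpha}-\alpha\int_{t_{0}}^{t}h(\xi)(t-\xi)^{-\alpha-1}\,d\xi\leq 0$,
the boundary contribution at $\xi=t$ vanishing because $h$ has a double zero there while the kernel blows up only like $(t-\xi)^{-\alpha}$ with $\alpha<1$. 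With that one computation made explicit, your argument is complete and coincides with the proof in the source the paper cites.
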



\section{The fractional HIV/AIDS model}
\label{sec:frac:model}

In this section we propose a Caputo fractional-order model for HIV/AIDS
with memory effects. Our population model assumes a constant recruitment rate, 
mass action incidence, and variable population size. 

The model subdivides human population into four mutually-exclusive 
compartments: susceptible individuals ($S$); 
HIV-infected individuals with no clinical symptoms of AIDS 
(the virus is living or developing in the individuals 
but without producing symptoms or only mild ones) 
but able to transmit HIV to others ($I$); 
HIV-infected individuals under ART treatment (the so called 
chronic stage) with a viral load remaining low ($C$); 
and HIV-infected individuals with AIDS clinical symptoms ($A$).
The total population at time $t$, denoted by $N(t)$, is given by
$N(t) = S(t) + I(t) + C(t) + A(t)$.
Effective contact with people infected with HIV is at a rate $\lambda$, given by
\begin{equation*}
\lambda = \beta \left( I + \eta_C \, C  + \eta_A  A \right),
\end{equation*}
where $\beta$ is the effective contact rate for HIV transmission.
The modification parameter $\eta_A \geq 1$ accounts for the relative
infectiousness of individuals with AIDS symptoms, in comparison to those
infected with HIV with no AIDS symptoms. Individuals with AIDS symptoms
are more infectious than HIV-infected individuals (pre-AIDS) because
they have a higher viral load and there is a positive correlation
between viral load and infectiousness. 
On the other hand, $\eta_C \leq 1$ translates the partial restoration 
of immune function of individuals with HIV infection that use ART correctly.
All individuals suffer from natural death, at a constant rate $\mu$. 
We assume that HIV-infected individuals 
with and without AIDS symptoms have access to ART treatment. 
HIV-infected individuals with no AIDS symptoms $I$ progress to the class 
of individuals with HIV infection under ART treatment $C$ at a rate $\phi$, 
and HIV-infected individuals with AIDS symptoms are treated for HIV at rate $\gamma$.
Individuals in the class $C$ leave to the class $I$ at a rate $\omega$.
We also assume that an HIV-infected individual with AIDS symptoms $A$ 
that starts treatment moves to the class of HIV-infected individuals $I$, 
moving to the chronic class $C$ only if the treatment is maintained. 
HIV-infected individuals with no AIDS symptoms $I$ that do not take 
ART treatment progress to the AIDS class $A$ at rate $\rho$. Note that 
only HIV-infected individuals with AIDS symptoms $A$ 
suffer from an AIDS induced death, at a rate $d$.
The Caputo fractional-order system that describes 
the previous assumptions is: 
\begin{equation}
\label{mod:frac}
\begin{cases}
_{t_{0}}^{C}D_{t}^{\alpha}S(t) 
= \Lambda - \beta \left( I(t) + \eta_C \, C(t)  
+ \eta_A  A(t) \right) S(t) - \mu S(t),\\[0.2 cm]
_{t_{0}}^{C}D_{t}^{\alpha}I(t) 
= \beta \left( I(t) + \eta_C \, C(t)  + \eta_A  A(t) \right) S(t) 
- \left(\rho + \phi + \mu\right) I(t) + \omega C(t) + \gamma A(t), \\[0.2 cm]
_{t_{0}}^{C}D_{t}^{\alpha}C(t) = \phi I(t) - (\omega + \mu)C(t),\\[0.2 cm]
_{t_{0}}^{C}D_{t}^{\alpha}A(t) = \rho \, I(t) - (\gamma + \mu + d) A(t).
\end{cases}
\end{equation}
The biologically feasible region of system \eqref{mod:frac} is given by 
\begin{equation}
\label{Omega:inv:region:HIV}
\Omega = \left\{ \left( S, I, C, A \right) \in \mathbb{R}_+^{4} 
\, : \, N \leq \frac{\Lambda}{\mu} \right\}.
\end{equation}
The model \eqref{mod:frac} has a disease free equilibrium given by
\begin{equation}
\label{DFE}
\Sigma_0 = \left(S^0, I^0, C^0, A^0 \right)
= \left(\frac{\Lambda}{\mu},0, 0,0  \right).
\end{equation}
Let
\begin{equation}
\label{eq:R0:model:1}
R_0 = \frac{ S^0 \beta\, \left(  \xi_2  \left( \xi_1 +\rho\, \eta_A \right) 
+ \eta_C \,\phi \, \xi_1 \right) }{\mu\, \left(  \xi_2  \left( \rho + \xi_1
\right) +\phi\, \xi_1 +\rho\,d \right) +\rho\,\omega\,d} 
= \frac{S^0 \mathcal{N}}{\mathcal{D}},
\end{equation}
where 
$\xi_1 = \gamma + \mu + d$, $\xi_2 = \omega + \mu$, 
$$
\mathcal{N} = \beta\, \left(  \xi_2  \left( \xi_1 +\rho\, \eta_A \right) 
+ \eta_C \,\phi \, \xi_1 \right)
$$ 
and 
$$
\mathcal{D} 
= \mu\, \left(  \xi_2  \left( \rho + \xi_1\right) 
+\phi\, \xi_1 +\rho\,d \right) +\rho\, \omega\,d.
$$
Whenever $R_0 > 1$, the model \eqref{mod:frac} has a unique endemic equilibrium 
$\Sigma_* = \left(S^*, I^*, C^*, A^* \right)$ given by
\begin{equation}
\label{EE}
S^* = \frac{ \mathcal{D}}{ \mathcal{N}}\, , \quad 
I^* = \frac{\xi_1 \xi_2 (\Lambda \mathcal{N} 
- \mu \mathcal{D})}{\mathcal{D} \mathcal{N}} \, , \quad
C^* = \frac{\phi \xi_1 (\Lambda \mathcal{N} 
-\mu \mathcal{D})}{\mathcal{D} \mathcal{N}} \, , \quad
A^* = \frac{\rho \xi_2 \left(
\Lambda \mathcal{N} - \mu \mathcal{D}\right)}{\mathcal{D} \mathcal{N}}.
\end{equation}	


\subsection{Local asymptotic stability of the disease free equilibrium $\Sigma_0$}
\label{sec:local:DFE}

As firstly proved in \cite{Matignon:1996},
stability is guaranteed if and only if the roots
of some polynomial (the eigenvalues of the matrix of dynamics or the poles
of the transfer matrix) lie \emph{outside} the closed angular sector 
$|\arg (\lambda)| \leq \frac{\alpha \pi}{2}$. In our case,
the Jacobian matrix $J (\Sigma_0)$ for system \eqref{mod:frac} evaluated 
at the uninfected steady state $\Sigma_0$ \eqref{DFE} is given by
\begin{equation}
\label{eq:JacDFE}
J (\Sigma_0) = 
\left[ 
\begin{array}{cccc} 
-\mu&-{\frac {\beta\,\Lambda}{\mu}}
&-{\frac {\beta\,\Lambda\, \eta_C }{\mu}}
&-{\frac {\Lambda\,\beta\,\eta_A }{\mu}}\\ 
\noalign{\medskip}
0&{\frac {\Lambda\,\beta}{\mu}}-\mu-\phi-\rho
&{\frac {\Lambda\,\beta\, \eta_C }{\mu}}+\omega
&{\frac{\Lambda\,\beta\, \eta_A }{\mu}}+\alpha\\ 
\noalign{\medskip}0&\phi&-\omega-\mu&0\\ 
\noalign{\medskip}
0&\rho&0&-\gamma-\mu-d
\end {array}\right].
\end{equation}
The uninfected steady state is asymptotically stable if all of the eigenvalues 
$\lambda$ of the Jacobian matrix $J (\Sigma_0)$ satisfy the following 
condition (see, e.g., \cite{Matignon:1996}): 
\begin{equation*}
|\arg (\lambda)| > \frac{\alpha \pi}{2}.
\end{equation*}
Let $\xi_3 = \rho + \phi + \mu$. The eigenvalues are determined 
by solving the characteristic equation $\det ( J ( \Sigma_0 ) − \lambda I ) = 0$. 
For $J( \Sigma_0 )$ as in \eqref{eq:JacDFE}, the characteristic equation is given by
$$
q \, p = 0
$$
with 
\begin{equation}
\label{eq:poly:q}
q = (\lambda+\mu)
\end{equation}
and
\begin{equation}
\label{eq:poly:p}
p = \lambda^3 + b_1 \lambda^2+ b_2 \lambda+ b_3,
\end{equation}
where
\begin{equation*}
\begin{split}
b_1 &= -\frac{\Lambda\,\beta  - \mu ( \xi_1 + \xi_2 + \xi_3)}{\mu} \, ,\\
b_2 &= -\frac{1}{\mu}\left( \Lambda \beta \left(  \eta_A \,\rho+ \eta_C \,
\phi+ \xi_1 + \xi_2 \right) -\mu\, \left( d ( \xi_2 + \xi_3)
+ \gamma (\mu + \xi_2 +\phi) + \mu (\xi_2 + \omega + 2 \xi_3) 
+\omega\,\rho \right) \right), \\
b_3 &= -\frac{1}{\mu}\left( \Lambda \mathcal{N} - \mu \mathcal{D} \right).
\end{split}
\end{equation*}
From \eqref{eq:poly:q} we have that the eigenvalue $\lambda_1 = -\mu$ satisfies 
$|\arg (\lambda_1)| > \frac{\alpha \pi}{2}$ for all $\alpha \in (0, 1)$. 
The discriminant $D(p)$ of the polynomial \eqref{eq:poly:p} 
is given (see \cite{Ahmed2007}) by 
\begin{equation*} 
D(p) = - \left| \begin{array}{c c c c c}
1 & b_1 &  b_2 &  b_3 &  0\\
0 & 1  & b_1 &  b_2 &  b_3\\
3 & 2 b_1 & b_2 &  0 & 0\\
0 & 3 & 2 b_1 &  b_2 & 0\\
0 & 0 & 3 & 2 b_1 & b_2
\end{array} \right| =
18 b_1 b_2 b_3+(b_1 b_2)^2-4 b_3 b_1^3 - 4 b_2^3 - 27 b_3^3.
\end{equation*}
Following \cite{Ahmed2007}, all roots of the polynomial \eqref{eq:poly:p} satisfy 
condition \eqref{cond:alphapi2} if the following conditions hold: 
\begin{itemize}
\item[(i)] if $D(p) > 0$, then the Routh--Hurwitz conditions are 
a necessary and sufficient condition for the equilibrium point $\Sigma_0$  
to be locally asymptotically stable, i.e., $b_1 > 0$, $b_3 > 0$ and $b_1 b_2 − b_3 > 0$;
	
\item[(ii)] if $D(p) < 0$, $b_1 \geq 0$, $b_2 \geq 0$, $b_3 > 0$, and $\alpha < 2/3$, 
then $\Sigma_0$ is locally asymptotically stable; 
	
\item[(iii)] if $D(p) < 0$, $b_1 < 0$, $b_2 < 0$, and $\alpha > 2/3$, 
then $\Sigma_0$ is unstable; 

\item[(iv)] if $D(p) < 0$, $b_1 > 0$, $b_2 > 0$, and $b_1 b_2 − b_3 = 0$, 
then $\Sigma_0$ is locally asymptotically stable for all $\alpha \in[0, 1)$;

\item[(v)] $b_3 >0$ is a necessary condition for local asymptotic stability of $\Sigma_0$. 
\end{itemize}


\subsection{Uniform asymptotic stability of the disease free equilibrium $\Sigma_0$}
\label{key}

In this section, we prove the uniform asymptotic stability of the disease 
free equilibrium $\Sigma_0$ \eqref{DFE} of the fractional order system \eqref{mod:frac}. 

\begin{theorem}
\label{theor:DFE}
Let $\alpha \in (0, 1)$. The disease free equilibrium $\Sigma_0$ \eqref{DFE}, 
of the fractional system \eqref{mod:frac}, is uniformly asymptotically 
stable in $\Omega$ \eqref{Omega:inv:region:HIV}, whenever \eqref{eq:R0:model:1} 
satisfies $R_0 < 1$.
\end{theorem}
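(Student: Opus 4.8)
The plan is to invoke Theorem~\ref{uniform_stability} with a Volterra-type Lyapunov function, using Lemma~\ref{lemmaVargas} to control the Caputo derivative of its logarithmic part. Since only the susceptible component of $\Sigma_0$ is nonzero, I would take
\begin{equation*}
V(S,I,C,A) = \left( S - S^0 - S^0 \ln\frac{S}{S^0}\right) + I + a_2\, C + a_3\, A,
\end{equation*}
where $S^0 = \Lambda/\mu$ and $a_2, a_3 > 0$ are constants to be fixed later. The bracketed term and each linear term are nonnegative on $\Omega$ and vanish only at $\Sigma_0$, so $V$ is positive definite and supplies the functions $W_1, W_2$ required by Theorem~\ref{uniform_stability}.

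Next I would estimate $_{t_{0}}^{C}D_{t}^{\alpha} V$. Applying Lemma~\ref{lemmaVargas} to the Volterra term and Property~\ref{propertyLinear} to the rest gives
\begin{equation*}
_{t_{0}}^{C}D_{t}^{\alpha} V \le \left(1 - \frac{S^0}{S}\right) {_{t_{0}}^{C}D_{t}^{\alpha}} S + {_{t_{0}}^{C}D_{t}^{\alpha}} I + a_2\, {_{t_{0}}^{C}D_{t}^{\alpha}} C + a_3\, {_{t_{0}}^{C}D_{t}^{\alpha}} A.
\end{equation*}
I would then substitute the right-hand sides of \eqref{mod:frac} and use $\Lambda = \mu S^0$. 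The susceptible terms collapse to $-\mu (S-S^0)^2/S \le 0$ together with $-\beta(I+\eta_C C+\eta_A A)(S-S^0)$, and crucially the coefficient $1$ in front of $I$ makes the incidence contribution $\beta(I+\eta_C C+\eta_A A)S$ cancel exactly, leaving only $\beta(I+\eta_C C+\eta_A A)S^0$. Collecting the remaining linear contributions yields a bound of the form
\begin{equation*}
_{t_{0}}^{C}D_{t}^{\alpha} V \le -\frac{\mu (S-S^0)^2}{S} + c_I\, I + c_C\, C + c_A\, A,
\end{equation*}
with $c_C = \beta\eta_C S^0 + \omega - a_2 \xi_2$, $c_A = \beta\eta_A S^0 + \gamma - a_3 \xi_1$ and $c_I = \beta S^0 - \xi_3 + a_2 \phi + a_3 \rho$, where $\xi_3 = \rho+\phi+\mu$.

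The heart of the argument is the choice of $a_2, a_3$ together with an algebraic identity. Taking $a_2 = (\beta\eta_C S^0+\omega)/\xi_2$ and $a_3 = (\beta\eta_A S^0+\gamma)/\xi_1$ makes $c_C = c_A = 0$, and a direct computation (multiplying by $\xi_1\xi_2$, factoring out $\mathcal{N}$, and using the identity $\mathcal{D} = \xi_1\xi_2\xi_3 - \xi_1\phi\omega - \xi_2\rho\gamma$) reduces $c_I$ to $\mathcal{D}(R_0-1)/(\xi_1\xi_2)$. Verifying this identity for $\mathcal{N}$ and $\mathcal{D}$ is the main computational obstacle. Since $\mathcal{D}, \xi_1, \xi_2 > 0$, the hypothesis $R_0 < 1$ forces $c_I < 0$ strictly.

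Finally, I would address the fact that this bound does not yet control the $C$ and $A$ directions. Because $c_I < 0$ strictly, I would perturb $a_2, a_3$ slightly above their critical values so that $c_C, c_A < 0$ as well while $c_I$ stays negative by continuity; then $_{t_{0}}^{C}D_{t}^{\alpha} V \le -W_3$ with $W_3(S,I,C,A) = \mu(S-S^0)^2/S - c_I I - c_C C - c_A A$ positive definite on $\Omega$. Theorem~\ref{uniform_stability} then delivers uniform asymptotic stability of $\Sigma_0$. The delicate points are thus the incidence-cancellation bookkeeping, the identity for $\mathcal{D}$, and securing strict negative definiteness in all four variables.
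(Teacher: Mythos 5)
Your argument is correct, but it takes a genuinely different route from the paper's. The paper uses a purely \emph{linear} Lyapunov function $V = c_1 I + c_2 C + c_3 A$ with no susceptible term, where $c_1 = \xi_1\xi_2 + \xi_1\phi\eta_C + \xi_2\rho\eta_A$ and suitable $c_2, c_3>0$; it therefore needs only the linearity Property~\ref{propertyLinear} rather than Lemma~\ref{lemmaVargas}, bounds the incidence by the crude inequality $S \le S^0$ valid on $\Omega$, and arrives at $_{t_{0}}^{C}D_{t}^{\alpha}V \le \mathcal{D}\left(R_0-1\right)\left(I + \eta_C C + \eta_A A\right)$. The price is that this bound is only negative \emph{semi}definite --- it vanishes on the whole set $\{I=C=A=0\}$ --- so the paper appends an informal LaSalle-type remark (substituting $(I,C,A)=(0,0,0)$ shows $S \to S^0$) that is not literally covered by Theorem~\ref{uniform_stability} as stated. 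Your version, by carrying the Volterra term in $S$ and cancelling the incidence exactly via $\Lambda=\mu S^0$ instead of bounding $S$ by $S^0$, produces the extra dissipative term $-\mu(S-S^0)^2/S$, and your final perturbation of $a_2, a_3$ yields a $W_3$ that is strictly negative definite in all four variables, so Theorem~\ref{uniform_stability} applies verbatim; both proofs ultimately rest on the same identities $\mathcal{D}=\xi_1\xi_2\xi_3-\xi_1\phi\omega-\xi_2\rho\gamma$ and $S^0\mathcal{N}-\mathcal{D}=\mathcal{D}(R_0-1)$, and your coefficient computation checks out. Two small caveats for your write-up: the logarithmic term forces you to work on $\Omega\cap\{S>0\}$, since your $V$ is undefined at $S=0$ whereas the paper's linear $V$ lives on all of $\Omega$; and to keep $W_3$ continuous you should replace $\mu(S-S^0)^2/S$ by the lower bound $\mu(S-S^0)^2/S^0$, which is valid because $S\le S^0$ on $\Omega$.
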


\begin{proof}
Consider the following Lyapunov function:
\begin{equation*}
V(t) =  c_1 I(t) + c_2 C(t) + c_3 A(t),
\end{equation*}
where 
\begin{equation*}
\begin{split}
c_1 &= \xi_1 \xi_2 + \xi_1 \phi \eta_C + \xi_2 \rho \eta_A, \\
c_2 &= \xi_1 \omega + \xi_1 \xi_3 \eta_C + \rho \eta_A \omega - \eta_C \rho \gamma, \\
c_3 &= \gamma \xi_2 + \xi_2 \xi_3 \eta_A + \phi \eta_C \gamma - \phi \eta_A \omega.
\end{split}
\end{equation*}
Function $V$ is defined, continuous and positive 
definite for all $I(t) > 0$, $C(t) > 0$ and $A(t) > 0$. 
By Property~\ref{propertyLinear}, we have 
\begin{equation*}
_{t_{0}}^{C}D_{t}^{\alpha}V  
=  c_1  {_{a}^{C}D}_{t}^{\gamma} I 
+ c_2 \, _{a}^{C}D_{t}^{\gamma}C  
+ c_3 \, _{a}^{C}D_{t}^{\gamma}A.
\end{equation*}
From \eqref{mod:frac} we have
\begin{equation*}
_{t_{0}}^{C}D_{t}^{\alpha}V  =  c_1 \left( \beta \left( I + \eta_C \, C  
+ \eta_A  A \right) S - \xi_3 I + \gamma A + \omega C \right) 
+ c_2 \left( \phi I - \xi_2 C \right)  + c_3 \left( \rho \, I - \xi_1 A \right). 
\end{equation*}
Note that 
$$
\xi_1 \omega + \xi_1 \xi_3 \eta_C + \rho \eta_A \omega 
- \eta_C \rho \gamma = \xi_1 \omega + \gamma (\phi + \mu)\eta_C 
+ (\mu + d) \xi_3 \eta_C + \rho \eta_A \omega > 0
$$ 
and 
$$
\gamma \xi_2 + \xi_2 \xi_3 \eta_A + \phi \eta_C \gamma - \phi \eta_A \omega 
= \gamma \xi_2 + \omega (\rho + \mu)\eta_A 
+ \mu \xi_3 \eta_A + \phi \eta_C \gamma > 0. 
$$
Therefore, we have 
\begin{equation*}
\begin{split}
_{t_{0}}^{C}D_{t}^{\alpha}V  
&= (\xi_1 \xi_2 \beta +  \xi_1 \phi \eta_C \beta  + \xi_2 \rho \eta_A \beta) I S 
+ (- \xi_1 \xi_2 \xi_3 + \xi_1 \omega \phi + \gamma \xi_2 \rho ) I\\
& \quad + \eta_C (\xi_1 \xi_2 \beta + \xi_1 \phi \eta_C \beta 
+ \xi_2 \rho \eta_A \beta ) C S + \eta_C (- \xi_1 \xi_3 \xi_2 
+ \xi_1 \phi \omega +  \rho \gamma \xi_2) C\\
& \quad + \eta_A (\xi_1 \xi_2 \beta + \xi_1 \phi \eta_C \beta 
+ \xi_2 \rho \eta_A \beta ) A S + \eta_A (- \xi_2 \xi_3 \xi_1 
+ \phi \omega \xi_1 + \xi_2 \rho \gamma ) A.
\end{split}
\end{equation*}
As $S \leq S^0$, 
\begin{equation*}
\begin{split}
_{t_{0}}^{C}D_{t}^{\alpha}V  
&\leq  (\xi_1 \xi_2 \beta +  \xi_1 \phi \eta_C \beta  
+ \xi_2 \rho \eta_A \beta) I S^0 + \left(- \xi_1 \xi_2 \xi_3 
+ \xi_1 \omega \phi + \gamma \xi_2 \rho \right) I\\
& \quad + \eta_C (\xi_1 \xi_2 \beta + \xi_1 \phi \eta_C \beta 
+ \xi_2 \rho \eta_A \beta ) C S^0 + \eta_C \left(
- \xi_1 \xi_3 \xi_2 + \xi_1 \phi \omega +  \rho \gamma \xi_2\right) C\\
& \quad + \eta_A \left(\xi_1 \xi_2 \beta + \xi_1 \phi \eta_C \beta 
+ \xi_2 \rho \eta_A \beta \right) A S^0 + \eta_A 
\left(- \xi_2 \xi_3 \xi_1 + \phi \omega \xi_1 + \xi_2 \rho \gamma \right) A 
\end{split}
\end{equation*}
holds. From  $S^0 \left(\xi_1 \xi_2 \beta +  \xi_1 \phi \eta_C \beta  
+ \xi_2 \rho \eta_A \beta\right) = \mathcal{N}$ 
and $- \xi_1 \xi_2 \xi_3 + \xi_1 \omega \phi 
+ \gamma \xi_2 \rho = - \mathcal{D}$, one has
\begin{equation*}
\begin{split}
_{t_{0}}^{C}D_{t}^{\alpha}V 
&\leq \mathcal{N} I - \mathcal{D} I  
+ \eta_C \left(\mathcal{N} C - \mathcal{D} C \right)
+ \eta_A \left(\mathcal{N} A - \mathcal{D} A \right)\\
&= \mathcal{D} I \left( R_0 - 1 \right)  
+ \eta_C \mathcal{D} C \left( R_0 - 1 \right) 
+ \eta_A \mathcal{D} A \left( R_0 - 1 \right)\\
&\leq 0 \, \, \text{for} \, \, R_0 < 1.
\end{split}
\end{equation*}
Because all the model parameters are nonnegative, it follows that 
$_{t_{0}}^{C}D_{t}^{\alpha}V \leq 0$ for $R_0 < 1$ with 
$_{t_{0}}^{C}D_{t}^{\alpha}V=0$ if, and only if, $I =C=A=0$. 
Substituting $(I, C, A) = (0, 0, 0)$ in \eqref{mod:frac} shows that 
$S \to S^0=\frac{\Lambda}{\mu}$ as $t \to \infty$. Hence, 
by Theorem~\ref{uniform_stability}, the equilibrium point $\Sigma_0$ 
of system \eqref{mod:frac} is uniformly asymptotically 
stable in $\Omega$, whenever $R_0 < 1$. 	
\end{proof}


\subsection{Uniform asymptotic stability of the endemic equilibrium $\Sigma_*$}
\label{sec:unif:stab}

In this section we prove uniform asymptotic stability of the endemic equilibrium  
$\Sigma_*$ \eqref{EE} of the fractional order system 
\eqref{mod:frac}.

\begin{theorem}
\label{theor:EndEqui}
Let $\alpha \in (0,1)$ and \eqref{eq:R0:model:1} be such that $R_0 > 1$.
Then the unique endemic equilibrium $\Sigma_*$ \eqref{EE}
of the fractional order system \eqref{mod:frac} 
is uniformly asymptotically stable in the interior 
of $\Omega$ \eqref{Omega:inv:region:HIV}.
\end{theorem}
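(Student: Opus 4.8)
The plan is to use the Volterra-type Lyapunov function, following the approach of Theorem~\ref{theor:DFE} but now built around the endemic equilibrium $\Sigma_*$ rather than the disease free equilibrium. For the endemic case we need a function that vanishes at $\Sigma_*$ and is positive elsewhere in the interior of $\Omega$, so the natural candidate is a weighted sum of Volterra entropy terms:
\begin{equation*}
V(t) = \sum_{X\in\{S,I,C,A\}} w_X \left( X - X^* - X^* \ln\frac{X}{X^*} \right),
\end{equation*}
with positive constants $w_S, w_I, w_C, w_A$ to be chosen. Each summand is nonnegative and positive definite about $X^*$ on the positive orthant, so $V$ is a legitimate Lyapunov candidate satisfying the two-sided bound $W_1(x)\le V\le W_2(x)$ required by Theorem~\ref{uniform_stability}.

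First I would differentiate $V$ along solutions of \eqref{mod:frac}. By Property~\ref{propertyLinear} the Caputo derivative distributes over the sum, and by Lemma~\ref{lemmaVargas} each term is bounded above:
\begin{equation*}
{_{t_0}^{C}D}_t^{\alpha} V \le \sum_{X} w_X \left(1 - \frac{X^*}{X}\right) {_{t_0}^{C}D}_t^{\alpha} X.
\end{equation*}
Next I would substitute the right-hand sides of \eqref{mod:frac} for each ${_{t_0}^{C}D}_t^{\alpha} X$. The key algebraic device is to use the equilibrium relations: at $\Sigma_*$ each equation of \eqref{mod:frac} vanishes, so for instance $\Lambda = \beta\lambda^* S^* + \mu S^*$ where $\lambda^* = I^* + \eta_C C^* + \eta_A A^*$, and $\phi I^* = \xi_2 C^*$, $\rho I^* = \xi_1 A^*$, together with the balance for the $I$-equation. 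Using these I would rewrite every constant and linear term so that the derivative is expressed purely in terms of the dimensionless ratios $s = S/S^*$, $i = I/I^*$, $c = C/C^*$, $a = A/A^*$. This is the standard manipulation that turns a Volterra Lyapunov derivative into a sum of expressions of the form $\bigl(\text{const}\bigr)\bigl(2 - s - 1/s\bigr)$ and similar grouped terms.

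The hard part will be choosing the weights $w_X$ so that, after the substitution, all residual cross terms assemble into expressions that are manifestly nonpositive by the arithmetic--geometric mean inequality. Concretely, one wants the surviving terms to collapse into combinations such as
\begin{equation*}
\left(2 - \frac{s^*}{s} - \frac{s}{s^*}\right) \le 0
\quad\text{and}\quad
\left(m - \sum_{k} \frac{y_k}{y_{k+1}}\right) \le 0,
\end{equation*}
where the second form is nonpositive by AM--GM whenever the arguments multiply to one around a cycle. The interaction terms coupling $S$ with $I, C, A$ through the incidence $\beta\lambda S$ are the delicate ones, because the weights must be tuned to cancel the linear-in-$S$ contributions against the recruitment term; I expect this to force a specific proportionality among $w_I, w_C, w_A$ dictated by the coefficients $c_1, c_2, c_3$ already appearing in Theorem~\ref{theor:DFE}, and $w_S$ to be fixed by the $S$-balance. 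Once the weights are pinned down and the bracketed terms are shown to be $\le 0$ on the interior of $\Omega$ with equality exactly at $\Sigma_*$, I would set $W_3(x) = -{_{t_0}^{C}D}_t^{\alpha} V \ge 0$, verify it is positive definite, and invoke Theorem~\ref{uniform_stability} to conclude uniform asymptotic stability of $\Sigma_*$ in the interior of $\Omega$ whenever $R_0 > 1$.
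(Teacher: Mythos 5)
Your plan is essentially the paper's own proof: a weighted sum of Volterra entropy terms, bounded via Lemma~\ref{lemmaVargas}, simplified with the equilibrium relations $\Lambda = \beta(I^*+\eta_C C^*+\eta_A A^*)S^*+\mu S^*$, $\xi_3 I^* = \beta(\cdots)S^*+\gamma A^*+\omega C^*$, $\xi_2 C^*=\phi I^*$, $\xi_1 A^*=\rho I^*$, grouped into brackets that are nonpositive by the AM--GM inequality, and closed with Theorem~\ref{uniform_stability}. The only point where you guess wrong is the weights: the paper takes $w_S=w_I=1$, $w_C=\omega/\xi_2$, $w_A=\gamma/\xi_1$ (chosen so the $\phi I$ and $\rho I$ inflows pair with the $\omega C$ and $\gamma A$ return flows), not a proportionality inherited from the coefficients $c_1,c_2,c_3$ of Theorem~\ref{theor:DFE}.
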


\begin{proof}
Consider the following function:
$$
V(t) =  V_1(S(t)) + V_2(L(t)) + \frac{\omega}{\xi_2} V_3(I(t) ) + \frac{\gamma}{\xi_1} V_4(T(t)),
$$
where
\begin{equation*}
\begin{split}
V_{1}(S(t))&= S - S^* - S^* \ln\left(\frac{S}{S^*} \right),\\
V_{2}(L(t))&= I - I^* - I^* \ln\left(\frac{I}{I^*} \right),\\
V_{3}(I(t))&= C - C^* - C^* \ln\left(\frac{C}{C^*} \right),\\  
V_{4}(T(t))&= A - A^* - A^* \ln\left(\frac{A}{A^*} \right) \, .
\end{split}
\end{equation*}
Function $V$ is a Lyapunov function because it is 
defined, continuous, and positive definite 
for all $S(t)>0$, $I(t)>0$, $C(t)>0$ and $A(t) > 0$.
By Lemma~\ref{lemmaVargas}, we have
\begin{equation*}
_{t_{0}}^{C}D_{t}^{\alpha}V  \leq \left(1-\frac{S^*}{S}\right) \,
{_{t_{0}}^{C}D}_{t}^{\alpha} S + \left(1-\frac{I^*}{I}\right)\, 
{_{t_{0}}^{C}D}_{t}^{\alpha} I 
+ \frac{\omega}{\xi_2} \left(1-\frac{C^*}{C}\right) \,
{_{t_{0}}^{C}D}_{t}^{\alpha} C+ \frac{\gamma}{\xi_1}  
\left(1-\frac{A^*}{A}\right) \,  {_{t_{0}}^{C}D}_{t}^{\alpha} A.
\end{equation*}
It follows from 
\eqref{mod:frac} that
\begin{multline}
\label{eq:difV:1}
_{t_{0}}^{C}D_{t}^{\alpha} V \leq \left(1-\frac{S^*}{S}\right)\left[ 
\Lambda - \beta \left( I + \eta_C \, C  + \eta_A  A \right) S - \mu S \right]\\
+ \left(1-\frac{I^*}{I}\right)\left[  \beta \left( I + \eta_C \, C  
+ \eta_A  A \right) S - \xi_3 I + \gamma A + \omega C \right]\\
+ \frac{\omega}{\xi_2}  \left(1-\frac{C^*}{C}\right)\left[  \phi I -  \xi_2 C \right] 
+ \frac{\gamma}{\xi_1} \left(1-\frac{A^*}{A}\right)\left[  \rho I - \xi_1 A \right].
\end{multline}
Using the relation $\Lambda = \beta \left( I^* + \eta_C \, C^*  
+ \eta_A  A^* \right) S^* + \mu S^*$, we have from the first equation 
of system \eqref{mod:frac} at steady-state that \eqref{eq:difV:1} can be written as
\begin{multline*}
_{t_{0}}^{C}D_{t}^{\alpha} V \leq \left(1-\frac{S^*}{S}\right)\left[ 
\beta \left( I^* + \eta_C \, C^*  + \eta_A  A^* \right) S^* + \mu S^* 
- \beta \left( I + \eta_C \, C  + \eta_A  A \right) S - \mu S \right]\\
+ \left(1-\frac{I^*}{I}\right)\left[  \beta \left( I + \eta_C \, C  
+ \eta_A  A \right) S - \xi_3 I + \gamma A + \omega C \right]\\
+ \frac{\omega}{\xi_2}  \left(1-\frac{C^*}{C}\right)\left[  \phi I -  \xi_2 C \right] 
+ \frac{\gamma}{\xi_1} \left(1-\frac{A^*}{A}\right)\left[  
\rho I - \xi_1 A \right],
\end{multline*}
which can then be simplified to
\begin{multline*}
_{t_{0}}^{C}D_{t}^{\alpha} V  \leq \left(1-\frac{S^*}{S}\right) \beta I^* S^* 
+ \mu S^* \left( 2 - \frac{S}{S^*} - \frac{S^*}{S}\right) - \beta I S + \beta I S^*\\
+ \beta( \eta_C C^* + \eta_A A^*) S^* - \beta (\eta_C C + \eta_A A) S 
- \frac{S^*}{S} \beta (\eta_C C^* + \eta_A A^*) S^* + S^* \beta (\eta_C C + \eta_A A)\\
+ \left(1-\frac{I^*}{I}\right)\left[  \beta \left( I + \eta_C \, C  
+ \eta_A  A \right) S - \xi_3 I + \gamma A + \omega C \right]\\
+ \frac{\omega}{\xi_2}  \left(1-\frac{C^*}{C}\right)\left[  \phi I -  \xi_2 C \right] 
+ \frac{\gamma}{\xi_1} \left(1-\frac{A^*}{A}\right)\left[  \rho I - \xi_1 A \right].
\end{multline*}
Using the relations at the steady state,
\begin{equation*}
\xi_3 I^* = \beta (I^* + \eta_C C^* + \eta_A A^*) S^*  + \gamma A^*  + \omega C^*, 
\quad \xi_2 C^* = \phi I^*, 
\quad \xi_1 A^* = \rho I^*,
\end{equation*} 
and, after some simplifications, we have
\begin{multline*}
_{t_{0}}^{C}D_{t}^{\alpha} V \leq \left( \beta I^* S^* 
+ \mu S^* \right) \left(2 - \frac{S}{S^*} -\frac{S^*}{S} \right) 
+ \beta S^*\left( \eta_C C^* + \eta_A A^* \right) 
\left( 2 - \frac{S^*}{S} - \frac{I}{I^*} \right)\\
+ \beta S^* \left( \eta_C C + \eta_A A \right) 
\left( 1 - \frac{I^*}{I} \frac{S}{S^*}\right)     
+ \gamma A^* \left( 1 - \frac{A}{A^*} \frac{I^*}{I} \right) 
+ \omega C^* \left( 1 - \frac{C}{C^*} \frac{I^*}{I} \right)\\
+  \frac{\omega \phi}{\xi_2} I^* \left( 1 - \frac{I}{I^*} \frac{C^*}{C} \right)
+ \frac{\gamma \rho}{\xi_1} I^* \left( 1 - \frac{I}{I^*} \frac{A^*}{A} \right) .
\end{multline*}
The terms between the larger brackets are less than or equal to zero by the 
well-known inequality that asserts the geometric mean to be less than 
or equal to the arithmetic mean. Therefore, $_{t_{0}}^{C}D_{t}^{\alpha} V(S, I, C, A)$ 
is negative definite when $0 < \alpha < 1$. By Theorem~\ref{uniform_stability}
(the uniform asymptotic stability theorem), the endemic equilibrium $\Sigma_*$ 
is uniformly asymptotically stable in the interior of $\Omega$, whenever $R_0 > 1$. 
\end{proof}

Note that the fractional model \eqref{mod:frac}
is stable independently of the parameter values. Indeed, the 
values of the parameters determine the value of $R_0$ and, 
for $R_0 < 1$, the stability of the system is, according 
with Theorem~\ref{theor:DFE}, ``around'' the disease free 
equilibrium $\Sigma_0$; for $R_0 >1$, the stability of the 
system is, in agreement with Theorem~\ref{theor:EndEqui}, 
``around'' the endemic equilibrium $\Sigma_*$.


\section{Numerical simulations}
\label{sec:numsim}

In this section we study the dynamical behavior of our model \eqref{mod:frac}, 
by variation of the noninteger order derivative $\alpha$.

\begin{table}[!htb]
\centering
\caption{Parameters values for the HIV/AIDS fractional model \eqref{mod:frac}. 
The parameter $\Lambda$ was estimated and the remaining ones
were taken from \cite{SilvaTorres:PrEP:2018}}.
\label{table:parameters}
\begin{tabular}{l  p{6.5cm} l }
\hline \hline
{\small{Symbol}} &  {\small{Description}} & {\small{Value}} \\
\hline
{\small{$\Lambda$}} & {\small{Recruitment rate}} & {\small{$2.1$}} \\
{\small{$\mu$}} & {\small{Natural death rate}} & {\small{$1/69.54$}} \\
{\small{$\eta_C$}} & {\small{Modification parameter}} & {\small{$0.015$}} \\
{\small{$\eta_A$}} & {\small{Modification parameter}} & {\small{$1.3$}} \\	
{\small{$\phi$}} & {\small{HIV treatment rate for $I$ individuals}} &  {\small{$1$}} \\
{\small{$\rho$}} & {\small{Default treatment rate for $I$ individuals}} & {\small{$0.1 $}} \\
{\small{$\gamma$}} & {\small{AIDS treatment rate}} & {\small{$0.33 $}} \\
{\small{$\omega$}} & {\small{Default treatment rate for $C$ individuals}} & {\small{$0.09$}} \\
{\small{$d$}} & {\small{AIDS induced death rate}} & {\small{$1$}} \\
\hline \hline
\end{tabular}
\end{table}

\subsection{Local asymptotic stability of the disease free equilibrium $\Sigma_0$}

Consider the parameter values of Table~\ref{table:parameters} and $\beta =0.001$. 
The basic reproduction number \eqref{eq:R0:model:1} is
$$
R_0 = 0.79587
$$ 
while the disease free equilibrium \eqref{DFE} takes the value
$$
\Sigma_0 = \left(\frac{\Lambda}{\mu},0, 0,0  \right) = \left( 146.034, 0, 0, 0  \right). 
$$
On the other hand, the discriminant $D(p)$ of the polynomial $p$ \eqref{eq:poly:p} 
is given by $D(p) = 0.51045 > 0$, $b_1 = 2.41711 > 0$, 
$b_3 =  0.00652 > 0$ and $b_1 b_2 b_3 = 0.02205 > 0$. Therefore, 
the Routh--Hurwitz conditions are a necessary and sufficient condition 
for the equilibrium point $\Sigma_0$ to be locally
asymptotically stable (see Section~\ref{sec:local:DFE}). 
The stability of the disease free equilibrium $\Sigma_0$ is illustrated 
in Figure~\ref{fig:SICA:DFE:07to1}, where we considered the initial conditions 
\begin{equation*}
\label{init_cond}
S(0)=0.8,\quad I(0)=0.1,\quad C(0)=0,\quad A(0)=0
\end{equation*}
and a fixed time step size of $h=2^{-6}$.

For the numerical implementation of the fractional derivatives, 
we have used the Adams--Bashforth--Moulton scheme, which has been 
implemented in the \textsf{Matlab} code \textsf{fde12} 
by Garrappa \cite{Garrappa}. This code implements a predictor-corrector 
PECE method of Adams--Bashforth--Moulton type, as described in \cite{Diethelm:1999}.

Regarding convergence and accuracy of the numerical method, we refer to \cite{Diethelm:2004}.
The stability properties of the method implemented by \textsf{fde12} 
have been studied in \cite{Garrappa:2010}. Here we considered, without loss of generality, 
the fractional-order derivatives $\alpha = 1.0, 0.9, 0.8$ and $0.7$.
\begin{figure}[htb]
\centering
\subfloat[\footnotesize{$S(t)$ for $\alpha \in \{0.7, 0.8, 0.9 1\}$ 
and $t \in [0, 10000]$.}]{\label{S:alpha07to1}
\includegraphics[width=0.45\textwidth]{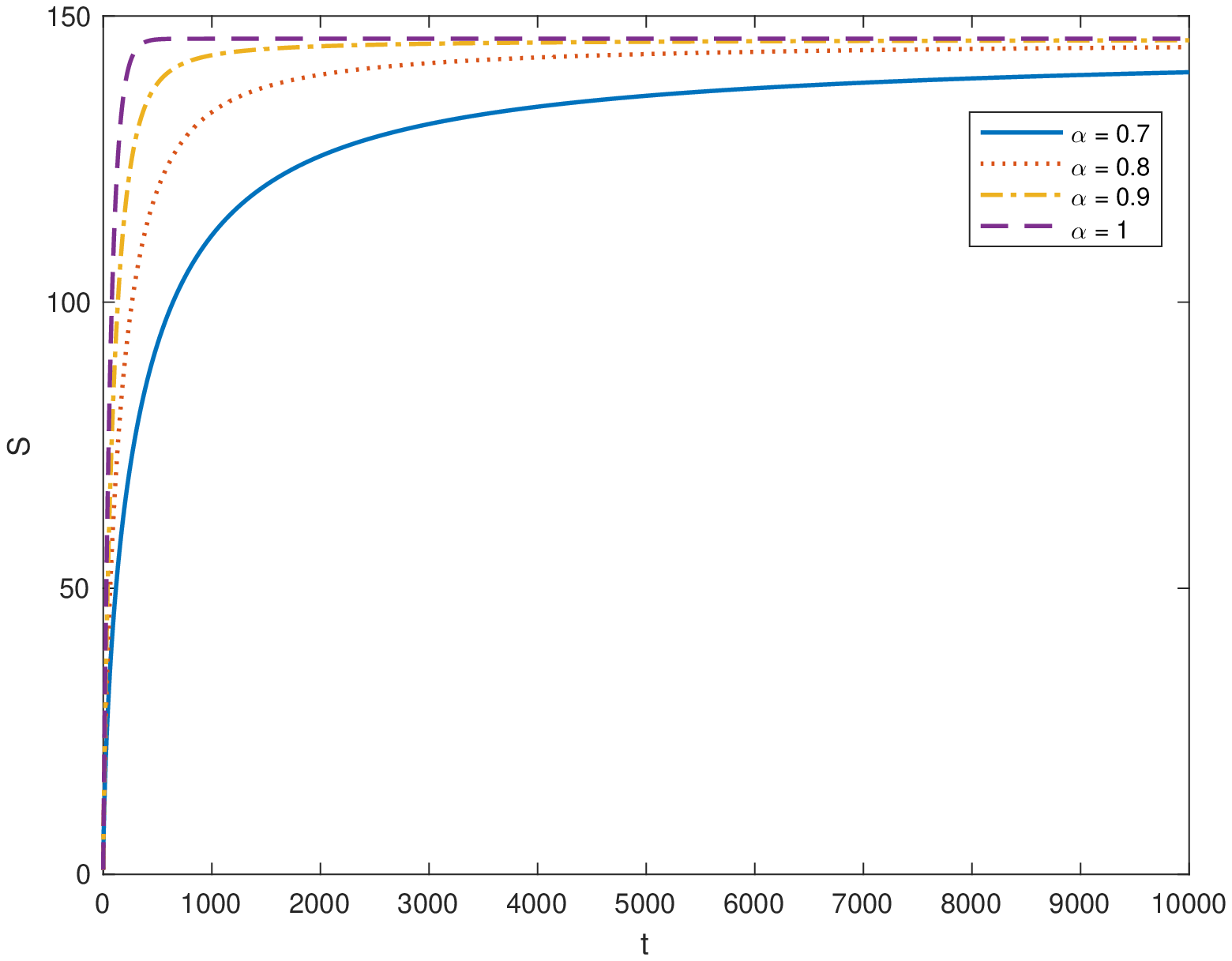}}
\subfloat[\footnotesize{$I(t)$ for $\alpha \in \{0.7, 0.8, 0.9 1\}$ 
and $t \in [0, 10000]$.}]{\label{I:alpha07to1}
\includegraphics[width=0.45\textwidth]{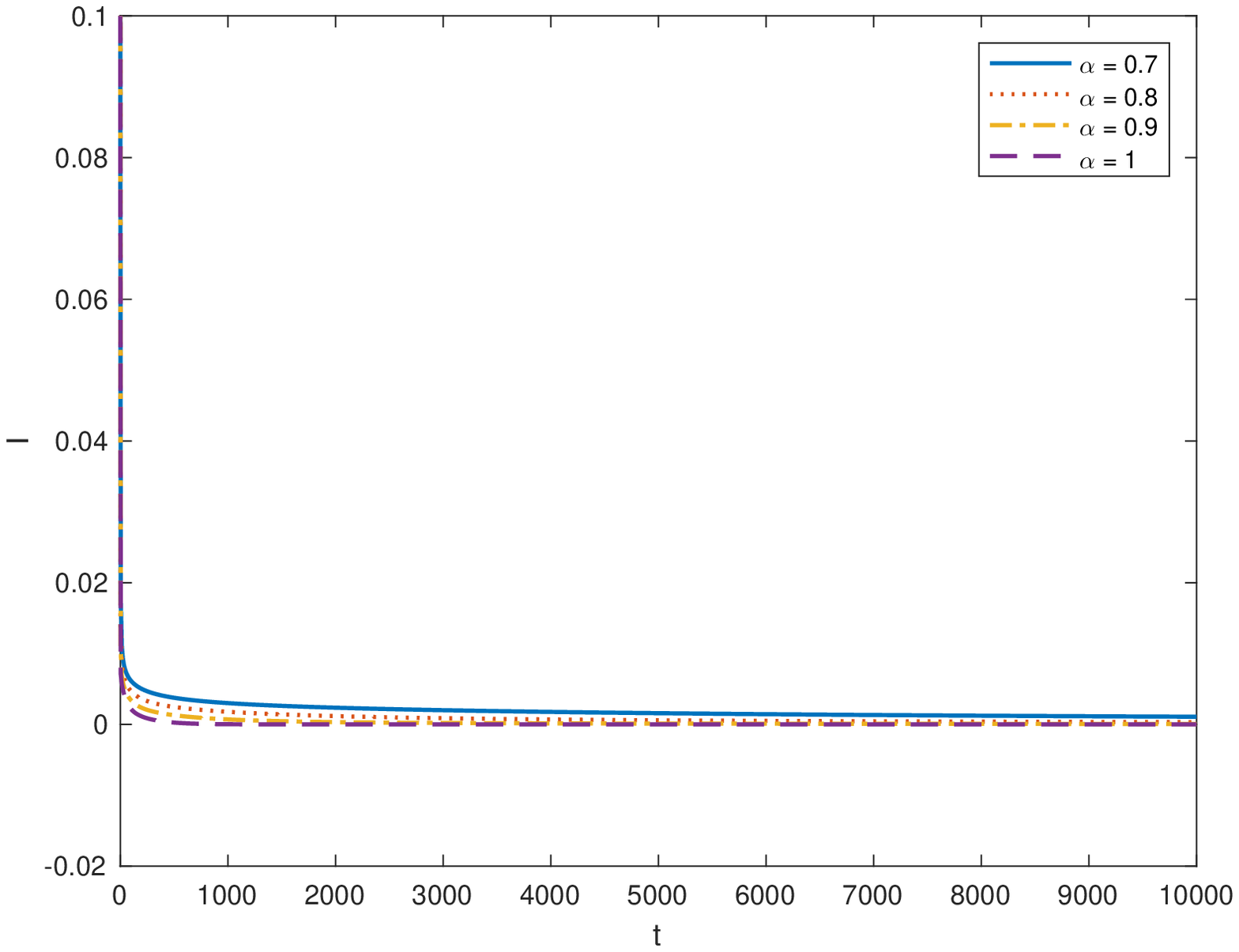}}\\
\subfloat[\footnotesize{$C(t)$ for $\alpha \in \{0.7, 0.8, 0.9 1\}$ 
and $t \in [0, 10000]$.}]{\label{C:alpha07to1}
\includegraphics[width=0.45\textwidth]{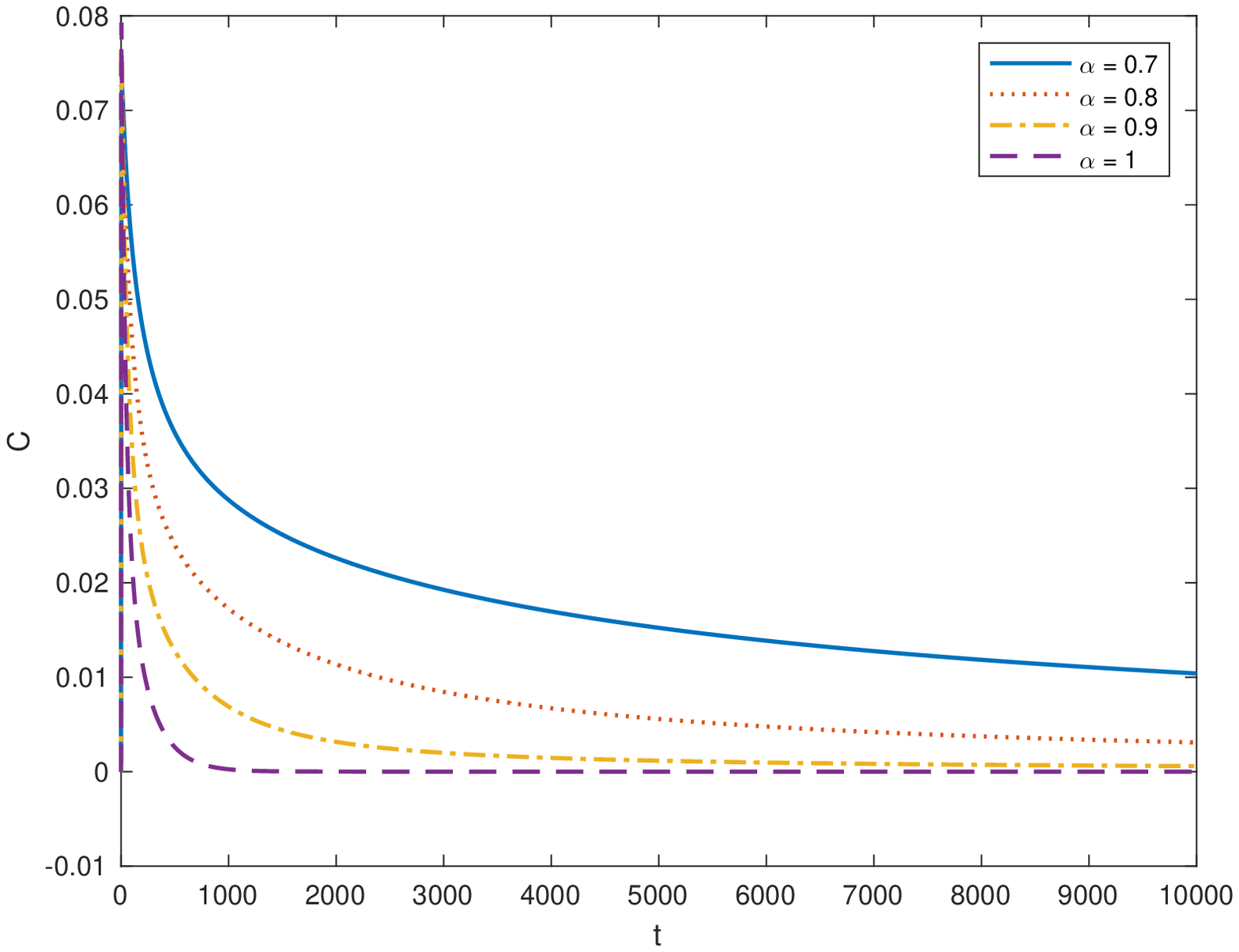}}
\subfloat[\footnotesize{$A(t)$ for $\alpha \in \{0.7, 0.8, 0.9 1\}$ 
and $t \in [0, 10000]$.}]{\label{A:alpha07to1}
\includegraphics[width=0.45\textwidth]{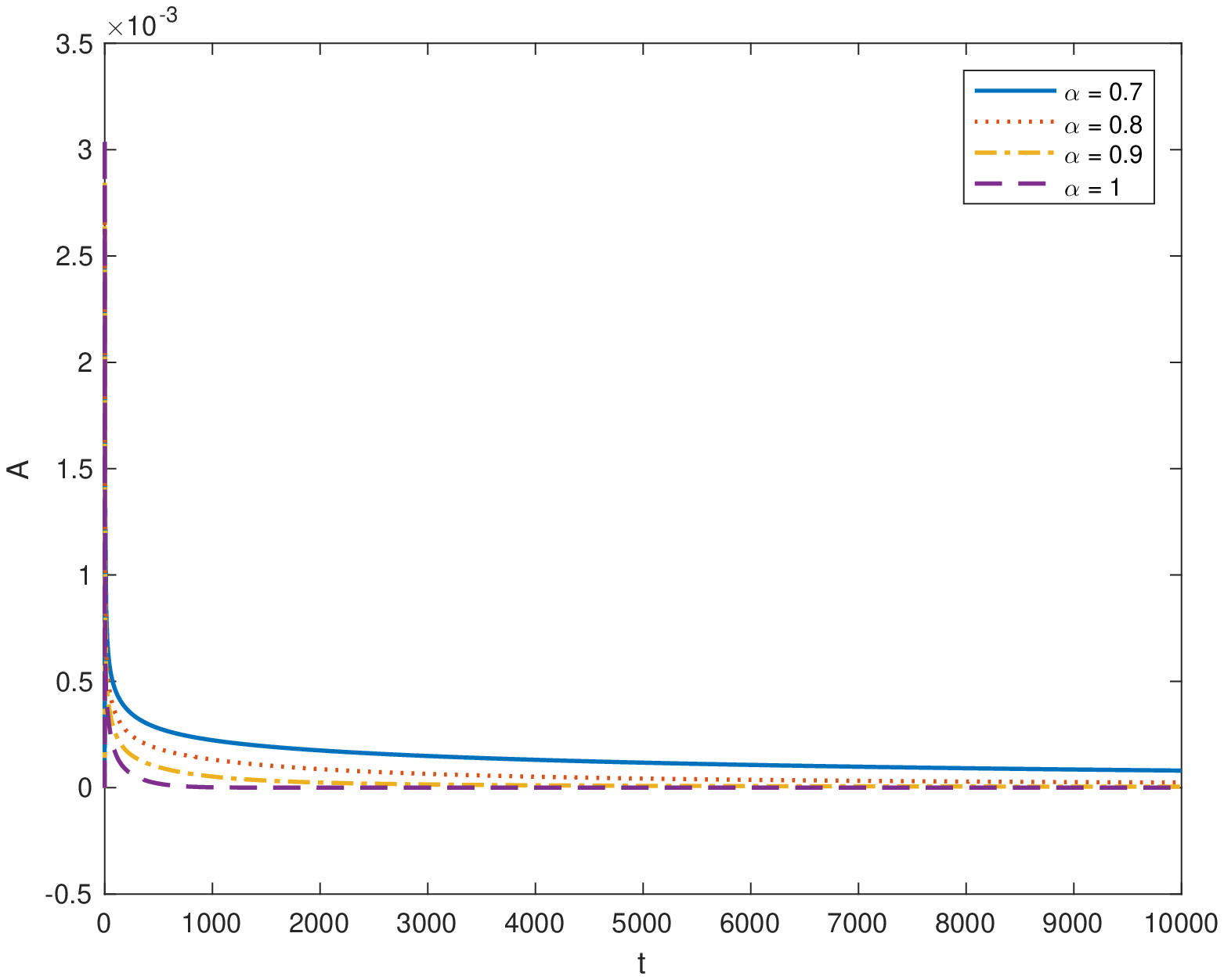}}
\caption{Stability of the disease free equilibrium $\Sigma_0$.}
\label{fig:SICA:DFE:07to1}
\end{figure}


\subsection{Stability of the endemic equilibrium $\Sigma_*$}

For the numerical study of the stability of the endemic equilibrium 
$\Sigma_*$ \eqref{EE}, we consider the parameter values from Table~\ref{table:parameters} 
and $\beta = 0.01$. The basic reproduction number \eqref{eq:R0:model:1} 
takes the value $R_0 = 7.95871$. The concrete value of the endemic equilibrium 
\eqref{EE} is $\Sigma_* = \left( 18.3490, 8.0673, 77.2881, 0.6001 \right)$. 
Figure~\ref{fig:SICA:EE:07to1} illustrates the stability of the endemic equilibrium
for the initial conditions 
\begin{equation*}
S(0)=100,\quad I(0)=1,\quad C(0)=0,\quad A(0)=0,
\end{equation*}
where a fixed time step size of $h=2^{-6}$ has been used.
\begin{figure}[htb]
\centering
\subfloat[\footnotesize{$S(t)$ for $\alpha \in \{0.7, 0.8, 0.9 1\}$ 
and $t \in [0, 10000]$.}]{\label{S:alpha07to1:EE}
\includegraphics[width=0.45\textwidth]{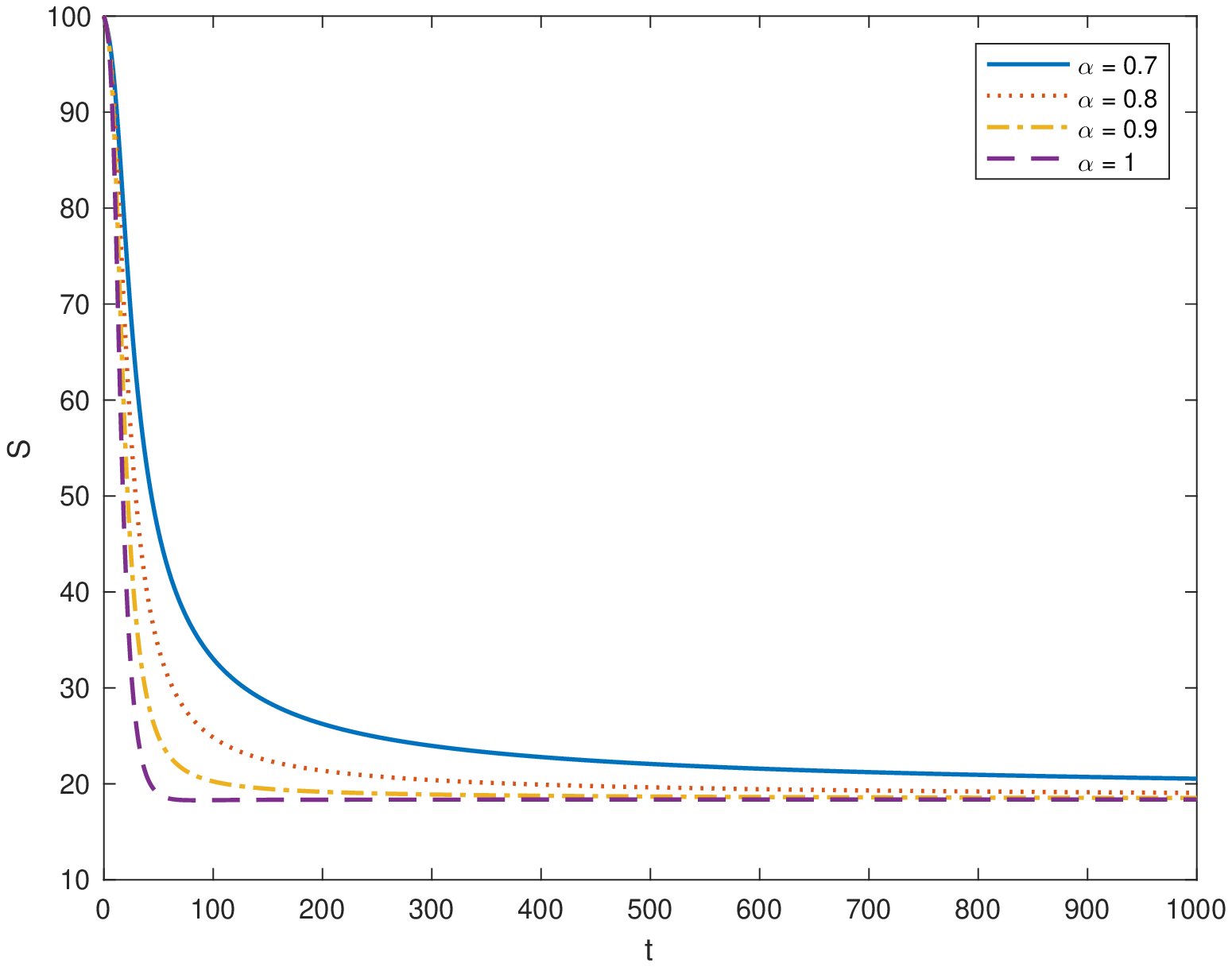}}
\subfloat[\footnotesize{$I(t)$ for $\alpha \in \{0.7, 0.8, 0.9 1\}$ 
and $t \in [0, 10000]$.}]{\label{I:alpha07to1:EE}
\includegraphics[width=0.45\textwidth]{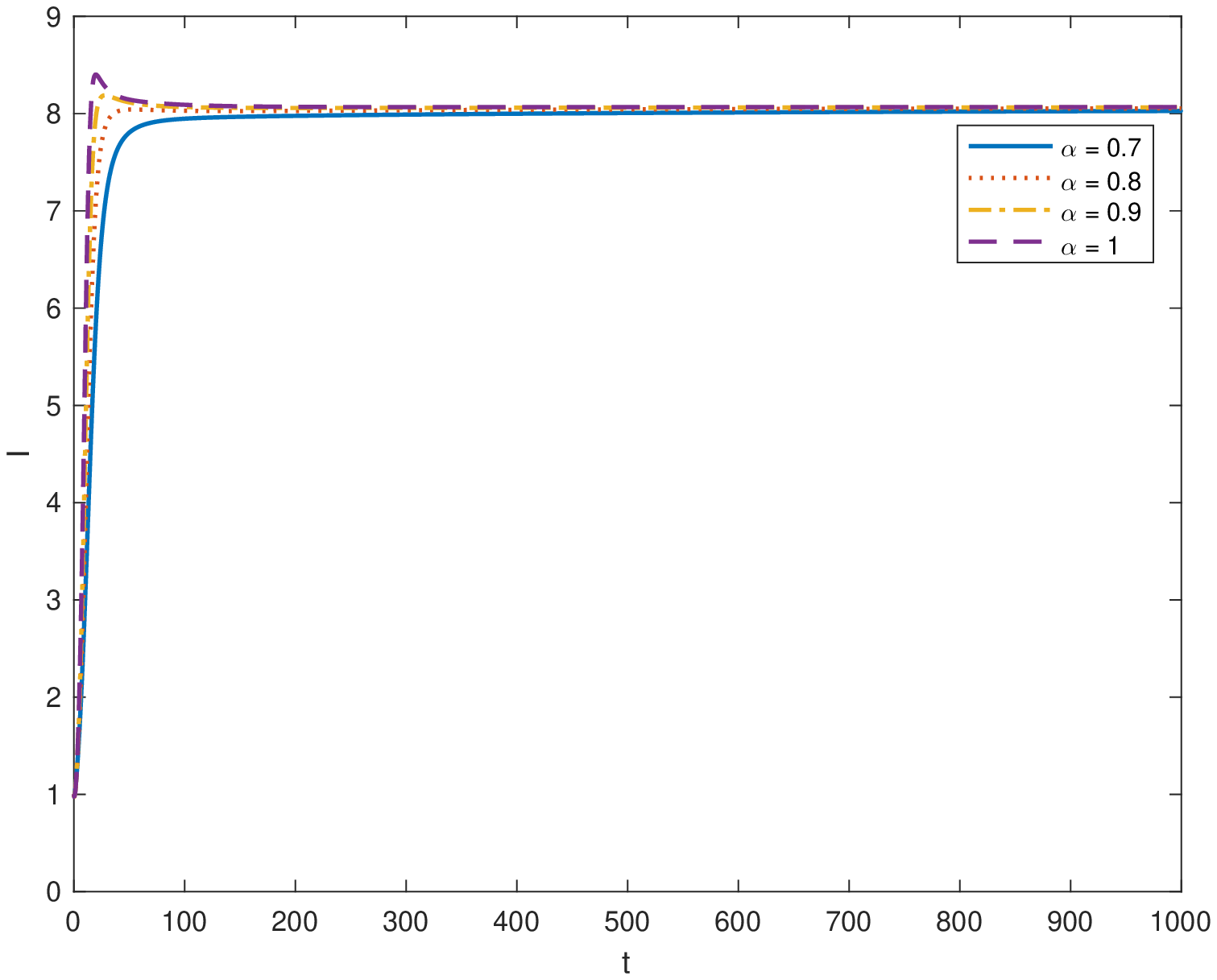}}\\
\subfloat[\footnotesize{$C(t)$ for $\alpha \in \{0.7, 0.8, 0.9 1\}$ 
and $t \in [0, 10000]$.}]{\label{C:alpha07to1:EE}
\includegraphics[width=0.45\textwidth]{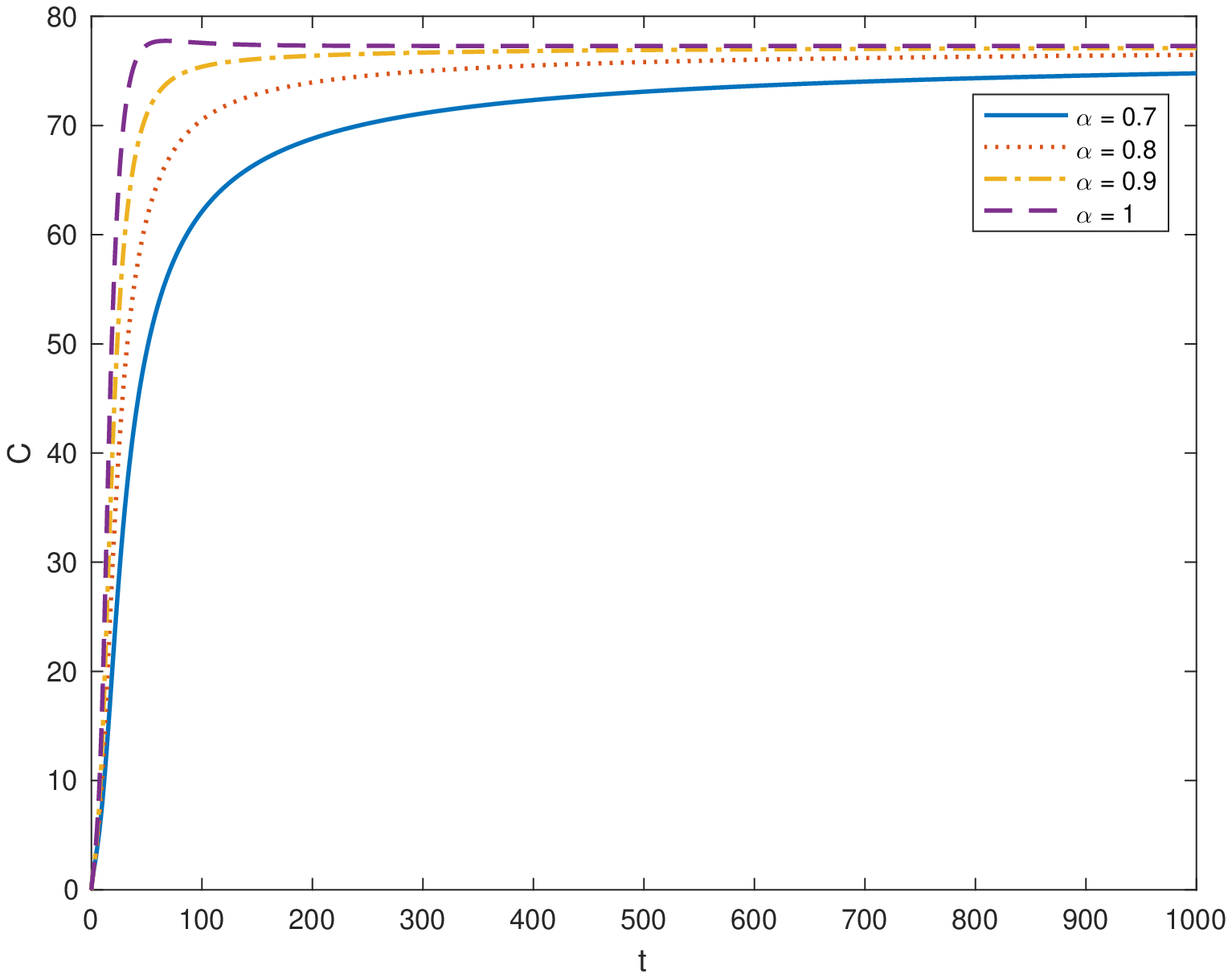}}
\subfloat[\footnotesize{$A(t)$ for $\alpha \in \{0.7, 0.8, 0.9 1\}$ 
and $t \in [0, 10000]$.}]{\label{A:alpha07to1:EE}
\includegraphics[width=0.45\textwidth]{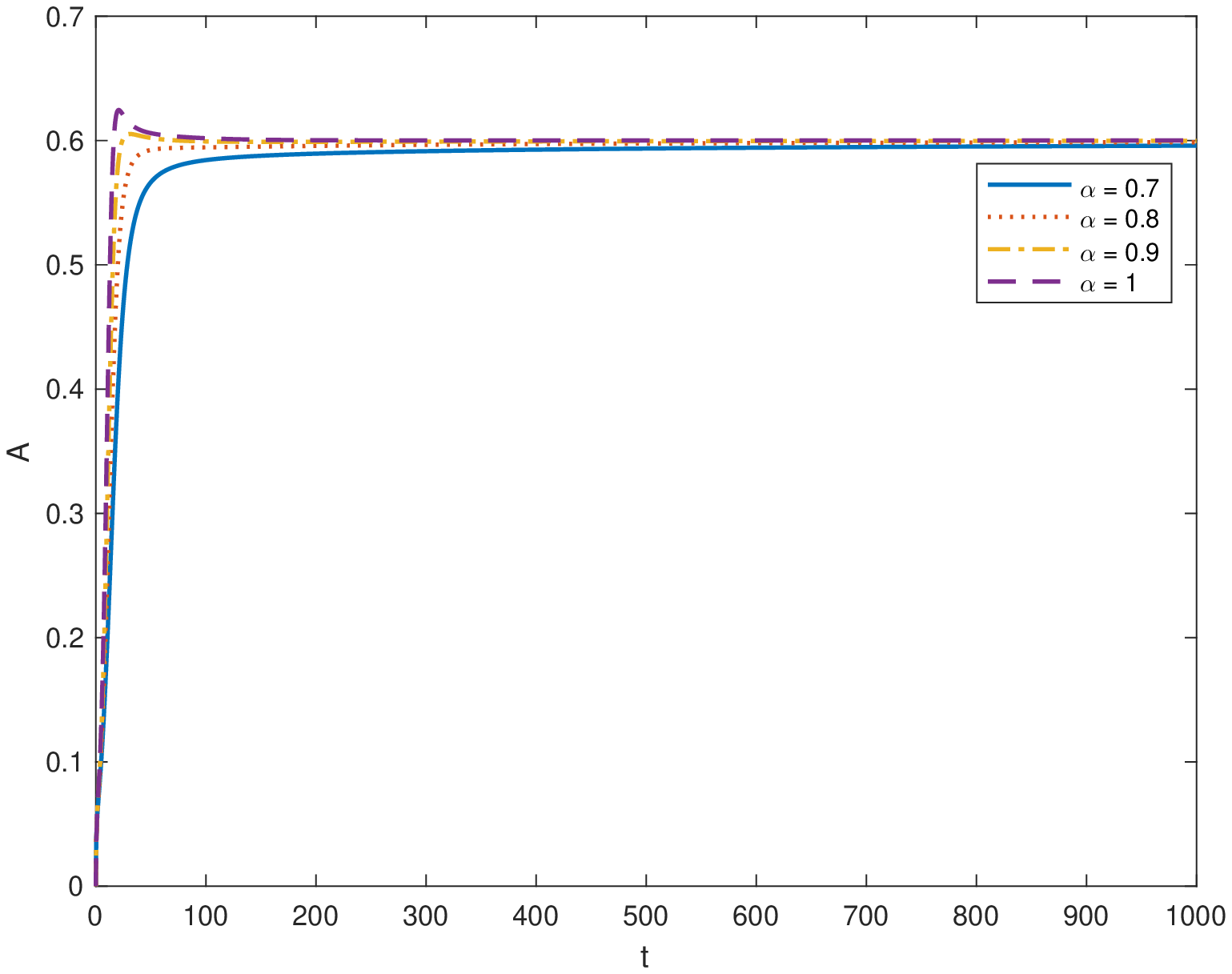}}
\caption{Stability of the endemic equilibrium $\Sigma_*$.}
\label{fig:SICA:EE:07to1}
\end{figure}

Our results show that the smaller the order $\alpha$ 
of the fractional derivative, the slower the convergence 
to the equilibrium point.


\section*{Acknowledgments}

This research was partially supported by the Portuguese Foundation 
for Science and Technology (FCT) through the R\&D unit CIDMA, 
reference UID/MAT/04106/2019, and by project PTDC/EEI-AUT/2933/2014 (TOCCATA), 
funded by FEDER funds through COMPETE 2020 -- Programa Operacional Competitividade 
e Internacionaliza\c{c}\~ao (POCI) and by national funds through FCT.
Silva is also supported by national funds (OE), through FCT, I.P., in the scope
of the framework contract foreseen in the numbers 4, 5 and 6
of the article 23, of the Decree-Law 57/2016, of August 29,
changed by Law 57/2017, of July 19. The authors  are grateful
to three reviewers for their critical remarks and
precious suggestions, which helped them to improve 
the quality and clarity of the manuscript.



\end{document}